\theoremstyle{plain}
\newtheorem{theorem}{Theorem}[section]
\newtheorem{corollary}[theorem]{Corollary}
\theoremstyle{definition}
\newtheorem{assumption}[theorem]{Assumption}
\newtheorem{definition}[theorem]{Definition}
\newtheorem{remark}[theorem]{Remark}
\numberwithin{equation}{section}
\def \au {\rm}
\def \ti {\it}
\def \jou {\rm}
\def \bk {\it}
\def \no#1#2#3 {{\bf #1} (#3), #2.}
\def \eds#1#2#3 {#1, #2, #3.}
\title[Weak Solutions of Linear Differential Equations in Hilbert Spaces]
{An Observation About Weak Solutions of\\
Linear Differential Equations in Hilbert Spaces}
\author[V. Pata and J.T. Webster]
{Vittorino Pata and Justin T. Webster}
\address{Politecnico di Milano, Milano, Italy}
\email{vittorino.pata@polimi.it}
\address{University of Maryland, Baltimore County, Baltimore, MD}
\email{websterj@umbc.edu}
\subjclass[2010]{34G10, 35L05, 35K05, 47D06, 65L60}
\keywords{Linear differential equations, weak solutions, Galerkin approximation}
\thanks{V.P.\ has been partially supported by the Italian MIUR-PRIN
Grant 2020F3NCPX “Mathematics for industry 4.0 (Math4I4)”}
\begin{document}

\begin{abstract}
In this note we propose a definition of weak solution for an abstract Cauchy problem in a Hilbert space,
and we discuss existence and uniqueness results.
\end{abstract}

\maketitle

\section{Existence of Weak Solutions}

\noindent
Let $H$ be a separable real Hilbert space with inner-product $(\cdot,\cdot)_H$ and norm $\|\cdot\|_H$,
respectively,
and let $A: \mathcal D(A) \subset H \to H$ be a densely defined
closed linear operator. The space $H$ is thought to be embedded in a bigger
Hilbert space, allowing us to continuously extend $A$ outside of its domain, as is typically the case
for differential operators.
Assume that we are given the abstract evolution in $H$ on the time-interval $[0,T]$
\begin{equation}
\label{cauchy}
\begin{cases}
\dot x = Ax +f, \\
x(0)=x_0\in H,
\end{cases}
\end{equation}
in the unknown $x=x(t)$, where $f=f(t)$ is a forcing term.
The aim of this note is to provide a convincing definition of weak solution which fully captures the structure of the Cauchy problem \eqref{cauchy}, and then
present a corresponding existence and uniqueness result. We make a baseline assumption.

\begin{assumption}
\label{Axx}
Let $W\hookrightarrow H$ be another Hilbert space, possibly coinciding with $H$.
\begin{itemize}
\item[(i)] $A$ extends to a bounded operator from $W$ to its image $AW$. Moreover,
identifying $H$ with its dual space $H^*$, there exists a Hilbert space $V$ such that $V^*=AW$,
and the following chain of (continuous and dense) embeddings hold:
$$V \hookrightarrow W\hookrightarrow H =H^*\hookrightarrow W^*\hookrightarrow V^*.$$
\item[(ii)] There exist $a\geq b>0$ such that, for all $x\in\mathcal D(A)$,
\begin{equation}
\label{AxxIN}
-(Ax,x)_H \geq b\|x\|_W^2-a\|x\|_H^2.
\end{equation}
\end{itemize}
\end{assumption}

In the sequel, $\langle\cdot,\cdot\rangle$ will stand for the duality pairing between $V^*$ and $V$,
extending the inner-product in $H$, whenever both defined.
We introduce the bilinear form $a(\cdot, \cdot):W\times V\to\mathbb R$
$$a(x,\tilde x) =-\langle Ax, \tilde x\rangle,$$
whose continuity follows from  (i).
Concerning the external force, we assume that
$$f\in L^2(0,T;W^*).$$

\begin{definition}[$W$-Weak Solution]
\label{defWEAK}
A function $x \in L^{\infty}(0,T;H) \cap L^2(0,T;W)$ is a $W$-weak solution of~\eqref{cauchy}
if and only if $x(0)=x_0$ and, for a.e.\ $t \in [0,T]$ and every test $y\in V$,
$$
\langle \dot x(t),y\rangle+a(x(t),y)=\langle f(t),y\rangle.
$$ From our assumptions,
such an equality dictates that
$\dot x\in L^2(0,T;V^*)$, and the embedding $H^1(0,T;V^*)\hookrightarrow C([0,T];V^*)$
ensures that $x(0)$ has meaning.
\end{definition}

In general, the space $W$ complying with Assumption~\ref{Axx}
is not unique. This will be clear in Example~\ref{heq}, where there are infinitely many choices of $W$.
For any viable $W$, we produce a legitimate notion of a (weak) solution.
Among them, we can locate the best possible one, corresponding
to the minimum with respect to the relation of inclusion of such spaces $W$.
This minimum provides the notion of solution that fully captures the features of the equation, as
it exploits the maximal information which can be extracted weakly from the operator $A$.
In principle, such a minimum might
not exist (if it is only an infimum), but it seems very unlikely to find
a concrete example where this sorry situation occurs.
We agree to simply call \emph{weak solution} the $W$-weak solution corresponding to the minimum $W$
or, equivalently, the strongest among $W$-weak solutions.

\begin{theorem}
\label{teoEXI}
Under Assumption~\ref{Axx}, problem~\eqref{cauchy} admits at least one $W$-weak solution.
\end{theorem}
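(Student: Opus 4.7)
My approach is a Faedo-Galerkin approximation in which the coercivity estimate in (ii) of Assumption~\ref{Axx} closes the key a priori bound, and the linear continuity of $A:W\to V^*$ allows passage to the limit using only weak $L^2$-convergence (no compact embedding is invoked). The solution space $L^\infty(0,T;H)\cap L^2(0,T;W)$ is exactly the one produced by that a priori estimate, so no further regularity needs to be squeezed out.

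I would begin by fixing an orthogonal basis $\{e_j\}_{j\geq 1}$ of the separable Hilbert space $V$, setting $V_n=\mathrm{span}\{e_1,\dots,e_n\}\subset V$, and taking $x_{0,n}\in V_n$ to be the $H$-orthogonal projection of $x_0$, so that $x_{0,n}\to x_0$ in $H$ by density. The finite-dimensional problem
\[
(\dot x_n(t),e_j)_H+a(x_n(t),e_j)=\langle f(t),e_j\rangle,\quad j=1,\dots,n,\qquad x_n(0)=x_{0,n},
\]
is a linear ODE system with an invertible (positive) Gram mass matrix $M_{ij}=(e_i,e_j)_H$, time-independent stiffness, and $L^2$ forcing, so it admits a unique $x_n\in H^1(0,T;V_n)$ on $[0,T]$.

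The heart of the proof is the energy estimate. Testing with $y=x_n(t)\in V_n\subset V\subset W$ and invoking \eqref{AxxIN} (extended from $\mathcal D(A)$ to $W$ by density, which is implicit in the statement that $A$ extends boundedly to $W\to V^*$), together with Young's inequality and Gronwall, I would obtain
\[
\|x_n\|_{L^\infty(0,T;H)}^2+\|x_n\|_{L^2(0,T;W)}^2\leq C
\]
uniformly in $n$. Weak-$*$ and weak compactness then produce a (not relabelled) subsequence converging to some $x\in L^\infty(0,T;H)\cap L^2(0,T;W)$ in the corresponding topologies.

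To conclude, I would fix $j$, multiply the Galerkin identity by an arbitrary $\phi\in C_c^\infty(0,T)$, and integrate. Since $A:W\to V^*$ is bounded and linear, $Ax_n\rightharpoonup Ax$ in $L^2(0,T;V^*)$, and every term passes to the limit; density of $\{e_j\}$ in $V$ then extends the resulting distributional identity to every $y\in V$, and that identity forces $\dot x\in L^2(0,T;V^*)$ as required by Definition~\ref{defWEAK}. The initial condition $x(0)=x_0$ is recovered by repeating the same integration against a test function $\phi\in C^1([0,T])$ with $\phi(T)=0$, $\phi(0)=1$, comparing the boundary terms on both sides, and invoking the embedding $H^1(0,T;V^*)\hookrightarrow C([0,T];V^*)$. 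The one mildly subtle point is the density extension of the coercivity from $\mathcal D(A)$ to $W$; once that is granted, the rest is a routine Lions-type argument, adapted only to the mild asymmetry of the form $a:W\times V\to\mathbb R$.
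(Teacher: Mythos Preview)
Your proposal is correct and follows essentially the same Faedo--Galerkin scheme as the paper: finite-dimensional approximations built on a basis contained in $V$, the energy estimate closed via \eqref{AxxIN} plus Young and Gronwall, and weak(-$*$) compactness to pass to the limit. The only cosmetic differences are that the paper phrases the limit passage through a uniform $H^1(0,T;V^*)$ bound rather than a distributional argument, and recovers $x(0)=x_0$ directly from that embedding; your explicit flagging of the density extension of \eqref{AxxIN} from $\mathcal D(A)$ to $W$ is, if anything, more careful than the paper, which uses it tacitly.
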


\begin{proof}
The proof is based on a standard Galerkin approximation scheme.
We select a basis $\{h_n\}$ of the space $H$ made by elements in $V$, and we denote
by $\Pi_n$ the self-adjoint projection from $H$
onto $F_n$, where $F_n$ is the $n$-dimensional subspace
of $H$ generated by the vectors $h_1,\ldots,h_n$. We agree to call the pair of spaces and projections
$(F_n,\Pi_n)$ a \emph{Galerkin family}.
Usually, the basis $\{h_n\}$ is chosen
to be orthonormal, although this is not strictly necessary.
Appealing to the classical theory of ODEs, for every $n\in {\mathbb N}$
there is a unique function $x_n\in AC([0,T];F_n)$ with prescribed
initial condition $x_n(0)=\Pi_n x_0$ that solves
the equation
\begin{equation}
\label{Gal}
\langle \dot x_n(t),y\rangle+a(x_n(t),y)=\langle f(t),y\rangle,
\end{equation}
for a.e.\ $t\in[0,T]$ and every test $y\in F_n$. Observe that the duality pairings appearing above are actually just $H$ inner-products at the level of $F_n$.
Such an $x_n$ is sometimes called a \emph{Galerkin approximate solution}.
Choosing as test function $y=x_n$, from point (ii) of Assumption~\ref{Axx}, together with Young's inequality, we obtain
the differential inequality
$$
\frac{d}{dt}\|x_n\|^2_H=-2a(x_n,x_n)+2\langle f,x_n\rangle
\leq -b\|x_n\|_W^2+2a\|x_n\|_H^2+\frac{1}{b}\|f\|_{W^*}^2.
$$
As $\|x_n(0)\|_H\leq \|x_0\|_H$, an application of the standard Gronwall lemma yields an \emph{energy estimate}
\begin{equation}
\label{eee}
\|x_n\|_{L^{\infty}(0,T;H)}+\|x_n\|_{L^2(0,T;W)}\leq C\big[\|x_0\|_H +\| f\|_{L^2(0,T;W^*)}\big],
\end{equation}
for some $C>0$ independent of $n$. Then, from~\eqref{Gal} and the embedding
$W^*\hookrightarrow V^*$, we also deduce the uniform bound
$$\|x_n\|_{H^1(0,T;V^*)}\leq C\big[\|x_0\|_H +\| f\|_{L^2(0,T;W^*)}\big],
$$
up to redefining $C$.
Accordingly, there exists
$x \in L^{\infty}(0,T;H) \cap H^1(0,T;V^*) \cap L^2(0,T;W)$ such that, up to a subsequence,
we have the weak$^*$ and weak convergences
$$
x_n {\stackrel{_*}{\rightharpoonup}} x \in L^{\infty}(0,T;H),\qquad
x_n \rightharpoonup x \in H^1(0,T;V^*),\qquad
x_n \rightharpoonup x\in L^2(0,T;W).
$$
At this point, we fix $m\in\mathbb N$ and we take $y\in F_m$.
Passing to the limit in~\eqref{Gal},
we obtain
$$\langle \dot x(t),y\rangle+a(x(t),y)=\langle f(t),y\rangle.$$
Since this is true for every $m$, we conclude that the latter equality holds for all test $y\in V$.
Finally, in light of the embedding $H^1(0,T;V^*)\hookrightarrow C([0,T];V^*)$,
we establish the equality $x(0)=x_0$. Thus, $x$ is a $W$-weak solution.
\end{proof}

Any solution obtained through this limiting procedure is called a \emph{Galerkin solution}.
On account of the weak lower semicontinuity of the norm, we infer from~\eqref{eee} an immediate corollary.

\begin{corollary}
\label{corEE}
There exists $C>0$ such that any Galerkin solution $x$ satisfies the energy estimate
\begin{equation}
\label{eeee}
\|x\|_{L^{\infty}(0,T;H)}+\|x\|_{L^2(0,T;W)}\leq C\big[\|x(0)\|_H +\| f\|_{L^2(0,T;W^*)}\big].
\end{equation}
Moreover, $x \in C([0,T];V^*)$.
\end{corollary}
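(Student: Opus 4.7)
The plan is to directly apply weak and weak$^*$ lower semicontinuity of the norms to the uniform bound \eqref{eee} established during the Galerkin construction, and then to invoke the same $H^1$-in-time continuous embedding used at the end of the existence proof for the regularity claim. No new estimate is needed; the entire content is already packaged in the proof of Theorem \ref{teoEXI}.

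For the energy estimate, I recall from that proof that the Galerkin approximations $x_n$ converge, up to a subsequence, to $x$ weakly$^*$ in $L^{\infty}(0,T;H)$ and weakly in $L^2(0,T;W)$. Since $H$ is separable, so is $L^1(0,T;H)$, whose dual is $L^{\infty}(0,T;H)$; hence the $L^{\infty}(0,T;H)$ norm is weak$^*$ lower semicontinuous. The $L^2(0,T;W)$ norm is weakly lower semicontinuous as a norm on a Hilbert space. Summing the two resulting liminf inequalities, plugging in \eqref{eee}, and noting that $x(0)=x_0$ by Definition \ref{defWEAK} (so $\|x_0\|_H=\|x(0)\|_H$), I obtain \eqref{eeee} with the same constant $C$.

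For the continuity statement, the existence proof already produced $x\in H^1(0,T;V^*)$ as a byproduct of weak convergence in that space, and explicitly cited the embedding $H^1(0,T;V^*)\hookrightarrow C([0,T];V^*)$ in order to give meaning to the initial datum. The same embedding at once gives $x\in C([0,T];V^*)$.

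I foresee no genuine obstacle: both assertions are immediate consequences of objects already constructed in the proof of Theorem \ref{teoEXI}. The only ingredient asking for a moment of care is weak$^*$ lower semicontinuity of the $L^{\infty}(0,T;H)$ norm, which hinges precisely on separability of $H$ and hence of the predual $L^1(0,T;H)$.
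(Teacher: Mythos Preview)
Your proposal is correct and follows exactly the approach the paper itself indicates: the paper states the corollary as an immediate consequence of the weak (and weak$^*$) lower semicontinuity of the norm applied to~\eqref{eee}, together with the embedding $H^1(0,T;V^*)\hookrightarrow C([0,T];V^*)$ already invoked at the end of the proof of Theorem~\ref{teoEXI}. Your write-up simply spells out these two ingredients with a bit more care, and nothing further is needed.
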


The uniqueness problem is more subtle, since one would hope to test a solution $x$
of the homogeneous problem (with a null initial datum) with $x$ itself. This is possible only if $x$ resides in the
correct test space, $V$, which may not be the case. However, we will see that by
introducing an additional straightforward hypothesis on $a(\cdot,\cdot)$, we
automatically gain uniqueness and boosted  regularity
of the weak solution. Before stating this hypothesis,
and the consequent result, we consider some motivational examples to establish the validity and applicability of our scheme.

\section{Three Examples}
\label{SecEx}

\noindent
Let $X^0$ be a separable real Hilbert space with inner-product $\langle\cdot, \cdot\rangle_0$ and norm
$\|\cdot\|_0$, and let $B$ be a strictly positive self-adjoint
linear operator on $X^0$ with domain $\mathcal D(B)$, compactly embedded
into $X^0$ (hence the spectrum of $B$ comprises only eigenvalues).
For $r\in\mathbb R$, we introduce the compactly nested scale of Hilbert spaces
$$X^r=\mathcal D(B^{\frac{r}2}),\qquad ( u,v)_r=( B^\frac{r}2 u,B^\frac{r}2 v)_0,
\qquad \|u\|_r=\|B^\frac{r}2 u\|_0.$$
If $r>0$, it is understood that $X^{r}$ is the completion of the domain, so that
$X^{-r}$ is the dual space of $X^r$.
The duality pairing between
$X^{-r}$ and $X^r$ will be still denoted by
$\langle\cdot,\cdot\rangle$.
The paradigmatic example we have in mind for such a $B$ is the Dirichlet Laplacian $-\Delta$ acting
on the Hilbert space $L^2(\Omega)$
with $\mathcal D(-\Delta) =H^2(\Omega)\cap H_0^1(\Omega)$, where $\Omega \subset \mathbb R^n$ is a bounded
domain with smooth boundary.

\subsection{The Abstract Heat Equation}
\label{heq}
Given $f\in L^2(0,T;X^{-1})$,
we consider the Cauchy problem in $H=X^0$
$$
\begin{cases}
\dot u = -B u +f, \\
u(0)=u_0\in H,
\end{cases}
$$
which is in the form \eqref{cauchy} with $A=-B$.
For every $u\in X^2$ we have that
\begin{equation}
\label{bilinearHeat}
(B u,u)_{0}=\|u\|^2_1.
\end{equation}
Accordingly, we take
$W=V=X^1$ and $V^*=X^{-1}$, and we define the bilinear form
on $X^1\times X^1$
$$
a(u,\tilde u)=(u, \tilde u)_{1}.
$$
A weak solution is then a function $u\in L^\infty(0,T;X^0)\cap L^2(0,T;X^1)$
such that $u(0)=u_0$ and
$$
\langle \dot u(t),\phi\rangle+(u(t), \phi)_{1}=\langle f(t),\phi\rangle,
$$
for a.e.\ $t\in[0,T]$ and every test $\phi\in X^1$. From Theorem~\ref{teoEXI}, such a solution exists.
Moreover, considering a solution $u$ of~\eqref{cauchy} with $u_0=0$ and $f=0$, we are in the ideal case where we can utilize $u$ itself as a test function,
since $u(t)\in X^1$ for a.e.\ $t\in[0,T]$. This provides the energy estimate~\eqref{eee} with $C=0$,
establishing the uniqueness of the solution as well.

\begin{remark}
It is evident from~\eqref{bilinearHeat} that Assumption~\ref{Axx} would be in place with any $W=X^{r}$ with $r\in[0,1]$.
Hence, although we obtained the weak solution (i.e., the strongest in this framework) by choosing $W=X^1$, we could have
given the notion of $X^r$-solution for any $r\in[0,1)$ as well. The weakest among those is the one corresponding to
$W=X^0$. Namely, for $f\in L^2(0,T;X^0)$,
a $X^0$-weak solution is a function $u\in L^\infty(0,T;X^0)$
such that $u(0)=u_0$ and
$$
\langle \dot u(t),\phi\rangle-(u(t), B\phi)_{0}=(f(t),\phi)_0,
$$
for a.e.\ $t\in[0,T]$ and every test $\phi\in X^2$. (See also the second bullet in Section \ref{finrem}.)
\end{remark}

\subsection{The Abstract Wave Equation}
\label{weq}
Given $g\in L^2(0,T;X^0)$,
we consider the Cauchy problem
$$
\begin{cases}
\ddot u = -B u+g, \\
u(0)=u_0,~~
\dot u(0)=u_1,
\end{cases}
$$
with $u_0\in X^1$ and $u_1\in X^0$.
Setting $x=[u,v]^\top$ and $f=[0,g]^\top$, the above can given the form~\eqref{cauchy}
in $H=X^1\times X^0$, by setting
$$A= \begin{bmatrix} 0 & I \\ -B & 0 \end{bmatrix}
\qquad\text{with domain}\qquad
\mathcal D(A)=X^2\times X^1.
$$
For $x\in X^2\times X^1$, we have
$$-(A x,x)_{X^1\times X^0}=0.
$$
In light of~\eqref{AxxIN}, this forces the equality $W=H=X^1\times X^0$.
Since $AW=X^0\times X^{-1}$,
and since the pivot space of the chain of embeddings is $H$, we obtain
$V=X^2\times X^1$ and $V^*=X^0\times X^{-1}$,
while, for $x=[u,v]^\top$ and $\tilde x=[\tilde u,\tilde v]^\top$, the bilinear form is given by
$$
a(x,\tilde x)=-(v, B\tilde u)_{0}+(u, \tilde v)_{1}.
$$
Then $x=[u,v]^\top$ with $u\in L^{\infty}(0,T;X^1)$
and $v\in L^{\infty}(0,T;X^0)$
is a weak solution if
$x(0)=[u_0,u_1]^\top$ and for every test $y=[\phi,\psi]^\top\in X^2\times X^1$
the equality
$$
(\dot u(t),B\phi)_{0}+\langle \dot v(t),\psi\rangle
-(v, B \phi)_{0}+(u(t), \psi)_{1}=(g(t),\psi)_{0}
$$
holds for a.e.\ $t\in[0,T]$.
Since the hypotheses on $B$ ensure that the vectors of the form $B\phi$ with $\phi\in X^2$
span $X^0$, we can equivalently say that $x$ is a weak solution if
for every test
$y=[\xi,\psi]^\top\in X^0\times X^1$
$$
(\dot u(t),\xi)_{0}+\langle \dot v(t),\psi\rangle
-(v, \xi)_{0}+(u(t),\psi)_{1}=(g(t),\psi)_{0}.
$$ From this definition,
we can easily recover the usual weak form of the wave equation.
Indeed, choosing $\psi=0$ above, we readily obtain the equality $v=\dot u$ in $X^0$. From there,
we choose $\xi =0$ to obtain what might be considered the weak formulation of the wave equation, that is,
$$
\langle \ddot u(t),\psi\rangle
+(u(t), \psi)_{1}=(g(t),\psi)_{0}.
$$
Although the existence follows from Theorem~\ref{teoEXI},
the subtle point comes in obtaining the energy estimate~\eqref{eee} for
any weak solution. In this case, one
should choose  $\psi=\dot u$
as test function in the latter equality, which is {\em forbidden}, since $\psi$ is required to live in $X^1$, whereas
$\dot u$ is in $X^0$ only.
Thus, in order to obtain uniqueness of solutions,
as well as a continuous dependence estimate, some additional (and nontrivial) work is needed; see
what is done, for instance, in~\cite[pp.406--408]{EVA} and \cite[II.4.1, pp.76--79]{TEM}.
Instead, by the method presented below, we will see that
when one has existence of a weak solution via a ``good" Galerkin construction,
as it happens here,
then the weak solution is automatically unique,
satisfies the energy estimate, continuously dependent on the initial data, and has additional regularity.

\subsection{The Abstract Damped Wave Equation}
\label{dweq}
Let $\alpha\in[0,1]$ be fixed. Given a forcing term $g\in L^2(0,T;X^{-\alpha})$,
we consider the Cauchy problem
$$
\begin{cases}
\ddot u = -B u-B^\alpha \dot u+g,\\
u(0)=u_0,~~
\dot u(0)=u_1,
\end{cases}
$$
with $u_0\in X^1$ and $u_1\in X^0$. Of particular interest are the two limit cases $\alpha=0$,
corresponding to the weakly damped wave equation or telegrapher's equation, and
$\alpha=1$ corresponding to the strongly damped wave equation, also known as
Kelvin-Voigt equation.
The problem can be given the form~\eqref{cauchy}
with $H=X^1\times X^0$, by setting
$$A= \begin{bmatrix} 0 & I \\ -B & -B^\alpha  \end{bmatrix}
\qquad\text{with domain}\qquad
\mathcal D(A)=
\left\{ [u,v]^\top \left|
\begin{array}{c}
v\in X^1\\
u + B^{\alpha-1}v \in X^2
\end{array}\right.
\right\},
$$
which becomes $\mathcal D(A)=X^2\times X^1$ whenever
$\alpha\leq 1/2$.
For $x=[u,v]^\top\in \mathcal D(A)$, we now have
$$-(A x,x)_{X^1\times X^0}=\|v\|_{\alpha}^2.
$$
Accordingly, $W=X^1\times X^\alpha$ is the best possible space complying with~\eqref{AxxIN}.
When $\alpha=0$ the only possibility is $W=H$.
Since $AW=X^\alpha\times X^{-1}$,
and since the pivot space is $H$, we obtain
$$V=X^{2-\alpha}\times X^1,\qquad W=X^1\times X^\alpha,\qquad
W^*=X^1\times X^{-\alpha},\qquad V^*=X^\alpha\times X^{-1}.
$$
For $x=[u,v]^\top$ and $\tilde x=[\tilde u,\tilde v]^\top$, the bilinear form is given by
$$
a(x,\tilde x)=-(v, \tilde u)_{1}+(u, \tilde v)_{1}
+(v, \tilde v)_{\alpha}.
$$
A function $x=[u,v]^\top$ with $u\in L^{\infty}(0,T;X^1)$
and $v\in L^{\infty}(0,T;X^0)\cap L^2(0,T;X^\alpha)$
is a weak solution if
$x(0)=[u_0,u_1]^\top$ and for every test $y=[\phi,\psi]^\top\in X^{2-\alpha}\times X^1$
the equality
$$
(\dot u(t),B^{1-\alpha}\phi)_{\alpha}+\langle \dot v(t),\psi\rangle
-(v(t), B^{1-\alpha}\phi)_{1}+(u(t), \psi)_{1}
+(v(t), \psi)_{\alpha}\\
=\langle g(t),\psi\rangle
$$
holds for a.e.\ $t\in[0,T]$. Since $B^{1-\alpha}$ maps $X^{2-\alpha}$ onto $X^\alpha$,
this is the same as saying that $x$ is a weak solution if
for every test $y=[\xi,\psi]^\top\in X^\alpha\times X^1$
$$
(\dot u(t),\xi)_{\alpha}+\langle \dot v(t),\psi\rangle
-(v(t), \xi)_{1}+(u(t), \psi)_{1}
+(v(t), \psi)_{\alpha}\\
=\langle g(t),\psi\rangle.
$$
Similarly to the previous example, we obtain the equality $v=\dot u$ in $X^\alpha$,
along with the usual weak formulation of the damped wave equation, that is,
$$
\langle \ddot u(t),\psi\rangle
+(u(t), \psi)_{1}+(\dot u(t),\psi)_{\alpha}=\langle g(t),\psi\rangle,
$$
Again, existence follows from Theorem~\ref{teoEXI}.
When $\alpha=1$ uniqueness is easily obtained as in Section \ref{heq},
as the solution lives in the test space.

\section{Uniqueness and Regularity}

\noindent
Whenever Assumption~\ref{Axx} is in force, Theorem~\ref{teoEXI} ensures the existence
of a $W$-weak solution to the Cauchy problem in~\eqref{cauchy} via a standard Galerkin construction. The next assumption
will automatically guarantee uniqueness of the solution, among several other improved properties.

\begin{assumption}
\label{good}
In the terminology of the proof of Theorem~\ref{teoEXI}, let there exists a Galerkin family
$(F_n,\Pi_n)$ with the following property:
for all vectors $x\in W$ and all $\tilde x\in V$,
$$ a(x,\Pi_n\tilde x)=a(\Pi_n x,\tilde x),\quad\forall n\in\mathbb N.$$
\end{assumption}

\begin{theorem}
\label{th:main}
Under Assumptions~\ref{Axx} and~\ref{good}, the Cauchy problem~\eqref{cauchy} admits a unique
$W$-weak solution $x$, for any admissible $W$ space. Moreover,
$x \in C([0,T];H)$ and
the map $x_0\mapsto x(t)$ belongs to $C(H;H)$ for any fixed $t\in[0,T]$.
\end{theorem}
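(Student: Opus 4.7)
The idea is to exploit Assumption~\ref{good} to convert the orthogonal projection $x_n:=\Pi_n x$ of any $W$-weak solution $x$ into a \emph{bona fide} Galerkin approximation of itself, thereby circumventing the standard obstruction that $x$ does not live in the test space $V$.

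Fix a $W$-weak solution $x$ and an index $n$. Since $\Pi_n$ is self-adjoint on $H$ and of finite rank, it extends by duality to a bounded $\Pi_n\colon V^*\to V^*$ satisfying $\langle\Pi_n w,v\rangle=\langle w,\Pi_n v\rangle$, and it commutes with time differentiation. Substituting the test $\Pi_n y\in F_n$ (for arbitrary $y\in V$) into the weak formulation, applying Assumption~\ref{good} to rewrite $a(x(t),\Pi_n y)=a(x_n(t),y)$, and using the duality identities $\langle\dot x,\Pi_n y\rangle=(\dot x_n,y)_H$ and $\langle f,\Pi_n y\rangle=(\Pi_n f,y)_H$, we obtain
$$(\dot x_n(t),y)_H+a(x_n(t),y)=(\Pi_n f(t),y)_H,\qquad y\in V,$$
so that $x_n\in AC([0,T];F_n)$ solves the Galerkin system~\eqref{Gal} with truncated data $(\Pi_n x_0,\Pi_n f)$. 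Testing with $y=x_n(t)\in F_n\subset V$ is now \emph{legal}; the argument of Theorem~\ref{teoEXI} yields the uniform estimate~\eqref{eee} with $\Pi_n x$ in place of $x_n$. Applied to the difference of two $W$-weak solutions sharing $x_0$ and $f$, it forces $\Pi_n x\equiv 0$ for every $n$; since $\Pi_n\to I$ strongly on $H$, we conclude $x\equiv 0$, proving uniqueness. Linearity combined with the same estimate delivers the continuous dependence $x_0\mapsto x(t)\in C(H;H)$.

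For the regularity $x\in C([0,T];H)$, observe first that $x\in C([0,T];V^*)\cap L^\infty(0,T;H)$ is already weakly continuous as a curve in $H$, so it suffices to show that $\{\Pi_n x\}\subset C([0,T];H)$ is Cauchy. Subtracting the identities above for indices $n\geq m$, the difference $z_{n,m}:=\Pi_n x-\Pi_m x$ satisfies the Galerkin system with forcing $(\Pi_n-\Pi_m)f$; testing with $y=z_{n,m}$ and using $\Pi_m z_{n,m}=0$ to reduce the forcing to $\langle f,z_{n,m}\rangle_{W^*,W}$, Assumption~\ref{Axx}(ii) and Gronwall yield
$$\sup_{t\in[0,T]}\|z_{n,m}(t)\|_H^2\le e^{2aT}\bigl[\|(\Pi_n-\Pi_m)x_0\|_H^2+2\|f\|_{L^2(0,T;W^*)}\,\|(\Pi_n-\Pi_m)x\|_{L^2(0,T;W)}\bigr].$$
The main obstacle is to justify that $(\Pi_n-\Pi_m)x\to 0$ in $L^2(0,T;W)$, which amounts to the projections extending to a uniformly bounded family on $W$ strongly convergent to the identity, so that dominated convergence applies pointwise in $t$. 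This structural compatibility is not formally contained in Assumption~\ref{good}, but is tacit in the notion of a ``good'' Galerkin family and is automatic in the examples of Section~\ref{SecEx}, where the basis consists of eigenvectors of $B$ orthogonal throughout the entire scale $\{X^r\}$. With this in hand, $\Pi_n x\to x$ in $C([0,T];H)$, yielding $x\in C([0,T];H)$.
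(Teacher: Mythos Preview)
Your uniqueness and continuous-dependence arguments are correct and coincide with the paper's: project a $W$-weak solution via $\Pi_n$, use Assumption~\ref{good} to recognize $\Pi_n x$ as a Galerkin approximate solution, and invoke the energy estimate. The paper phrases this slightly differently (it identifies $x$ itself as a Galerkin solution and then appeals to Corollary~\ref{corEE}), but the substance is the same.

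The gap is in your continuity-in-time argument, and you have correctly identified it yourself: your Cauchy estimate for $z_{n,m}=\Pi_n x-\Pi_m x$ terminates in $\|(\Pi_n-\Pi_m)x\|_{L^2(0,T;W)}$, and driving this to zero requires $\Pi_n\to I$ strongly on $W$ with uniform operator bound there. This is \emph{not} implied by Assumptions~\ref{Axx} and~\ref{good}; Assumption~\ref{good} is a purely algebraic compatibility of $\Pi_n$ with the bilinear form $a$, and says nothing about the behavior of $\Pi_n$ in the $W$-norm. Declaring it ``tacit'' does not close the gap at the level of generality claimed by the theorem.

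The paper circumvents this obstruction by a two-step density argument that never leaves $H$. First, for $f\in L^2(0,T;H)$, one shows (using Assumption~\ref{good} to push the equation for $x_m$ up to tests in $F_n$) that $x_n-x_m$ solves the Galerkin system with forcing $f-\Pi_m f$, and the resulting Cauchy bound involves only $\|f-\Pi_m f\|_{L^2(0,T;H)}$, which tends to zero because $\Pi_m\to I$ strongly on $H$ by construction. Second, for general $f\in L^2(0,T;W^*)$, one approximates $f$ by a sequence $f_n\in L^2(0,T;H)$; the corresponding solutions are Cauchy in $C([0,T];H)$ by a direct application of Corollary~\ref{corEE} to their difference. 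No boundedness of $\Pi_n$ on $W$ is ever invoked. You should replace your direct estimate with this density argument.
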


\begin{proof}
Due to linearity, uniqueness follows once we prove that any solution to the
homogeneous problem with null initial datum
is  trivial.
Let then $x$ be any such solution (namely, of \eqref{cauchy} with $x_0=0$ and $f = 0$), and set $x_n(t)=\Pi_n x(t)$.
In light of Assumption~\ref{good}, taking a test $y\in F_n$, we have the equality
$$
\langle \dot x_n(t),y\rangle+a(x_n(t),y)=0.
$$
This tells that $x_n$ is a Galerkin approximate solution, which is known to converge (up to a subsequence)
to some Galerkin solution $\hat x$. But since we already have that $x_n(t)\to x(t)$ in $H$, this forces the equality $\hat x=x$.
Hence, $x$ is a Galerkin solution of the homogeneous problem with initial datum $x_0=0$, and
applying Corollary~\ref{corEE} allows us to conclude that $x$ is the null solution. By the same token, once uniqueness is established
(and so all solutions are Galerkin solutions)
we establish the continuous dependence estimate. Indeed, let $x,\hat x$ be two solutions of~\eqref{cauchy}
with initial data $x(0)=x_0$ and $\hat x(0)=\hat x_0$, respectively. Then, their difference $x-\hat x$
is the solution of the homogeneous problem with initial datum $x_0-\hat x_0$, and Corollary~\ref{corEE}
provides the estimate
$$\|x-\hat x\|_{L^\infty(0,T;H)}+\|x-\hat x\|_{L^2(0,T;W)} \leq C\|x_0-\hat x_0\|_H.
$$
We are left to prove the continuity in time.
To this end, let us assume first that $f \in L^2(0,T;H)$. For an arbitrarily given $x_0\in H$,
we construct the Galerkin approximate
solutions $x_n$,
based on the Galerkin family $(F_n,\Pi_n)$.
Now let $n\geq m$. Exploiting
Assumption~\ref{good},
for any test $y\in F_n$, we have the equality
$$\langle \dot x_m(t),y\rangle+a(x_m(t),y)
=\langle \dot x_m(t),\Pi_m y\rangle+a(x_m(t),\Pi_m y)=\langle f(t),\Pi_m y\rangle=\langle\Pi_m f(t),y\rangle.$$
Hence, for any test $y\in F_n$,  the difference $x_n-x_m$ fulfills
$$\langle \dot x_n(t)-\dot x_m(t),y\rangle+a(x_n(t)-x_m(t),y)=\langle f(t)-\Pi_m f(t),y\rangle.
$$
Choosing $y=x_n-x_m$,
and arguing as in the proof of Theorem~\ref{teoEXI},
we obtain the estimate
$$
\sup_{t\in[0,T]}\|x_n(t)-x_m(t)\|_H \leq C\Big[\|\Pi_n x_0-\Pi_m x_0\|_H+\| f- \Pi_mf\|_{L^2(0,T;H)}\Big].
$$
Observe that the last term of the right-hand side above goes to zero as $m\to\infty$, by the Lebesgue dominated convergence theorem.
Therefore, $x_n$ is a Cauchy sequence in $C([0,T];H)$, and so converges to an element $x$ in that space. Such an $x$
is exactly the previously established (unique) solution, taken with initial datum $x_0$.
Now, to deal with the general case, let us  consider a sequence $f_n\in L^2(0,T;H)$ converging to
$f \in L^2(0,T;W^*)$ in the latter norm. This time, we call $x_n$ the solution with initial datum $x_0$ corresponding
to the forcing term $f_n$. A further application of Corollary~\ref{corEE} yields the estimate
$$
\sup_{t\in[0,T]}\|x_n(t)-x_m(t)\|_H \leq C\|f_n- f_m\|_{L^2(0,T;W^*)}.
$$
Again, we conclude that $x_n$ converges to $x\in C([0,T];H)$. And it is standard matter to verify that $x$ is the solution
with initial datum $x_0$ corresponding to the forcing term $f$.
\end{proof}

\begin{remark} It is clear from the above proof that,
if one wants to prove only uniqueness and continuous dependence,
then Assumption~\ref{good} is not needed in its full strength;  it suffices to take only the weaker assumption
$$a(x,\Pi_n\tilde x)=a(\Pi_n x,\Pi_n\tilde x),\quad\forall n\in\mathbb N.$$
\end{remark}

Returning to the examples of Section~\ref{SecEx},
let $\lambda_1\leq\cdots \leq\lambda_n\to\infty$ be the sequence of eigenvalues of $B$,
counted with multiplicity,
and let $\{b_n\}$ be the corresponding sequence of  eigenvectors,
which form (if normalized) a complete orthonormal family in $X^0$ belonging to $X^r$ for all $r>0$.
For the Cauchy problem~\eqref{heq}, the basis $\{h_n\}$ of $H=X^0$ needed to construct $F_n$
is simply $\{b_n\}$.
For the Cauchy problems~\eqref{weq} and~\eqref{dweq}, the orthonormal basis of $H=X^1\times H^0$
is made by the  vectors $h_{2n+1}=b_n/\sqrt{\lambda_n}\,\oplus 0$ and $h_{2n}=0\oplus b_n$.
In all  cases presented here,  our Assumption~\ref{good} is then immediately verified.

\section{Final Remarks}\label{finrem}

\noindent
$\bullet$ Under Assumptions~\ref{Axx} and~\ref{good}, we have a unique
$W$-weak solution of the Cauchy problem~\eqref{cauchy}.
Since all the $W$-weak solutions arise from the same Galerkin scheme, they must coincide.
Hence, the point of locating the best possible $W$ is actually the one of giving a notion
of solution apt to describe all the regularity properties satisfied by the solution itself.

\smallskip
\noindent
$\bullet$ Although we stated our results under the hypothesis $f\in L^2(0,T;W^*)$,
everything works if we take a more general forcing term $f\in L^1(0,T;H)+L^2(0,T;W^*)$.
The only additional ingredient needed in the proofs is a modified form of the Gronwall lemma (see, e.g., \cite{PPV}),
in order to obtain the energy estimate~\eqref{eee}.

\smallskip
\noindent
$\bullet$
In a standard way, the existence of a unique $W$-weak solution implies that the operator $A$ is the infinitesimal generator
of a $C_0$-semigroup $S(t)$ acting on $H$. In which case, it is well-known that if
$f \in L^1(0, T; H)$ then the Cauchy problem~\eqref{cauchy} admits a unique \emph{mild solution},
given by the variation of parameters (Duhamel) formula (see, e.g., \cite{ENG}),
$$x(t)=S(t) x+\int_0^t S(t-s) f(s) d s.$$
Such a mild solution is nothing but the $H$-weak solution, hence, the weakest
possible arising in the framework presented here.

\smallskip
\noindent
$\bullet$ A different definition of weak solution for \eqref{cauchy} has been proposed in~\cite{BAL}.
Namely,
a function $x\in C([0, T]; H)$ is a weak solution of \eqref{cauchy} if and only if $x(0)=x_0$ and for every
$y \in \mathcal D(A^*)$ the map $t \mapsto \langle u(t), y\rangle$ is absolutely continuous on $[0, T]$ and
\begin{equation}\label{b}
\frac{d}{d t}\langle u(t), y\rangle=(u(t), A^*y)_H+(f(t), y)_H,
\end{equation}
for a.e.\ $t \in [0,T]$, where $A^*$ is the adjoint of the operator $A$.
Actually, in~\cite{BAL} it is shown that $A$ is the generator of a $C_0$-semigroup if and only if for every initial datum $x_0\in H$
there exists a unique weak solution in the  sense of \eqref{b}. This definition, although very elegant, is of less
practical use since, in general, finding the adjoint of $A$ is a formidable task.

\smallskip
\noindent
$\bullet$ As already mentioned, one way to establish the existence and uniqueness of weak (mild) solutions
is to prove that the operator $A$ is the infinitesimal generator
of a $C_0$-semigroup $S(t)$. This requires us to apply the Feller-Miyadera-Phillips theorem.
Unfortunately, as clearly stated in~\cite[Comment 3.9, pp.77--78]{ENG}, checking the hypotheses of this theorem
is out of question in the general case. It can be reasonably accomplished only when the resulting
$S(t)$ is an $\omega$-contraction
(in which case the Feller-Miyadera-Phillips theorem boils down to the Hille-Yosida theorem). This means that, in
the operator norm, $S(t)$ fulfills the estimate $\|S(t)\|\leq e^{\omega t}$,
for some $\omega\in{\mathbb R}$. If so, such a semigroup generator $A$ satisfies the estimate
$$(Ax,x)_H \leq \omega \|x\|_H^2,\quad\forall x\in\mathcal D(A).$$
This is (in fact, a particular case of) point (ii) of our Assumption~\ref{Axx}.




\begin{thebibliography}{00}

\bibitem{BAL}
{\au J.M. Ball},
{\ti Strongly continuous semigroups, weak solutions, and the variation of constants formula},
{\jou  Proc.\ Amer.\ Math.\ Soc.}
\no{63}{370--373}{1977}

\bibitem{ENG}
{\au K.-J. Engel, R. Nagel},
{\bk One-parameter semigroups for linear evolution equations},
\eds{Springer-Verlag}{New York}{2000}

\bibitem{EVA}
{\au L.C. Evans},
{\bk Partial differential equations},
\eds{American Mathematical Society}{Providence, RI}{2010}

\bibitem{PPV}
{\au V. Pata, G. Prouse and M.I. Vishik},
{\ti Traveling waves of dissipative non-autonomous hyperbolic equations in a strip},
{\jou Adv.\ Differential Equations}
\no{3}{249--270}{1998}

\bibitem{TEM}
{\au R. Temam},
{\bk Infinite-dimensional dynamical systems in mechanics and physics},
\eds{Springer}{New York}{1997}

\end{thebibliography}
\end{document}